\documentclass[12pt,leqno]{amsart}
\usepackage{amscd}
\usepackage{color}
\usepackage{amssymb}
\usepackage{amsfonts}
\usepackage{latexsym}
\usepackage{verbatim}

\theoremstyle{plain}
\newtheorem{theorem}{Theorem}[section]
\newtheorem{conj}[theorem]{Conjecture}

\newtheorem{cor}[theorem]{Corollary}

\renewcommand{\b}{\begin{equation}}
\newcommand{\e}{\end{equation}}

\title[]{A note on the pluriclosed flow on balanced manifolds with $c_1=0$}

\makeatletter
\@namedef{subjclassname@2020}{%
 \textup{2020} Mathematics Subject Classification}
\makeatother
\subjclass[2020]{53E30, 53C55, 53B35, 22E25}

\keywords{Balanced metric, pluriclosed metric, pluriclosed flow}

\address{ (Anna Fino) Dipartimento di Matematica ``G. Peano''\\
Universit\`{a} degli studi di Torino \\
Via Carlo Alberto 10\\
10123 Torino, Italy\\
\& Department of Mathematics and Statistics\\
Florida International University\\
Miami, FL 33199, United States}
\email{annamaria.fino@unito.it, afino@fiu.edu}

\address{(Luigi Vezzoni) Dipartimento di Matematica G. Peano \\ Universit\`a di Torino\\
Via Carlo Alberto 10\\
10123 Torino\\ Italy}
\email{luigi.vezzoni@unito.it}

\begin{document}
\author{Anna Fino and Luigi Vezzoni}
\maketitle

\date{\today}
\begin{abstract}
%We conjecture that on a compact balanced manifold $M$ with $c_1(M)=0$ %the pluriclosed flow has a long-time solutions $\omega_t$ for every %initial pluriclosed metric and that $\omega_t$ converges smoothly to %a K\"ahler metric. We observe that this happens when $M$ is a compact %quotient of a Lie group, $\omega_B$ is left-invariant with vanishing %Chern-Ricci form and $\omega_t$ is left-invariant. This in particular %gives a nee confirmations to the Fino-Vezzoni conjecture.  
We conjecture that on any compact balanced manifold  $(M, \omega_B)$  with  $c_{1}(M)=0$, the pluriclosed flow admits long-time solutions  $\omega_{t}$ for every initial pluriclosed metric, and that  $\omega_{t}$ converges smoothly to a K\"ahler metric as  $t \to \infty$.  We verify that this phenomenon occurs when $M$ is a compact quotient of a Lie group by a discrete subgroup, the background metric $\omega_{B}$ is invariant with vanishing Chern--Ricci form, and the initial metric $\omega_{0}$ is invariant. In particular, this provides new evidences for the Fino-Vezzoni conjecture.\end{abstract}

\section{Introduction}
A Hermitian metric on a complex manifold $(M,J)$ is a Riemannian metric $g$ which is compatible with the complex structure $J$, in the sense that
$g(JX, JY) = g(X,Y)$
for all  vector fields $X,Y$. The associated fundamental $(1,1)$-form (or Hermitian form) $\omega$ is defined by $\omega(X,Y) = g(JX, Y).$ From now on, we will  identify the Hermitian metric $g$ with its associated fundamental form $\omega$.
Recall that a Hermitian metric $\omega$ on a compact complex manifold of complex dimension $n$  is 
\emph{balanced} if 
$$d\omega^{n-1}=0,$$ where  $\omega^{n-1}$ denotes the $(n-1)$-fold wedge product, and \emph{pluriclosed} (or SKT) if 
$$
\partial\bar\partial \omega = 0.$$ A natural question is to study the interplay between the balanced and pluriclosed conditions. A Hermitian metric which is balanced  and  pluriclosed is necessarily  K\"ahler  \cite{AI, Popovici}.
In \cite{FV} the authors proposed the following still open conjecture
\begin{conj}\label{finovezzoni}
If a compact complex manifold $M$ admits a balanced
metric and a pluriclosed metric, then it must admit a K\"ahler metric.
\end{conj}

The conjecture was mainly motivated by the papers \cite{chiose,FuLiYau,verb} where it is proved the non-existence of pluriclosed metrics on some compact non-K\"ahler complex  manifolds admitting balanced metrics. 
Some partial confirmations of the conjecture are given in \cite{CZ, FGV25,FP,finoparadiso,FV,FV2,GZ, GiPo, Kwong,LiZheng, Otiman,podesta,zheng}. Non-compact counterexamples of this conjecture have been instead constructed in \cite{_Freibert-Swann_}.

\medskip 
In the present paper we focus on Conjecture \ref{finovezzoni} when the first Chern class $c_1(M)$ of the complex manifold $M$ is vanishing. 
In this case the pluriclosed flow \cite{ST} appears as a natural tool for studying Conjecture \ref{finovezzoni} since, accordingly to a conjecture in \cite{ST}, condition $c_1(M)=0$ should imply that solutions to the pluriclosed flow exist for every positive time and we expect that a background balanced metric could be used for studying the long-time behaviour of the flow.

\medskip 
We state, accordingly, the following new conjecture

\begin{conj}\label{newfinovezzoni}   
Let $(M, \omega_B)$ be a compact complex manifold with vanishing first Chern class, equipped with a balanced metric. Assume that $M$ admits a pluriclosed metric $\omega_0$. Then the pluriclosed flow with initial condition $\omega_{t\mid t=0} = \omega_0$ exists for all time and converges smoothly to a K\"ahler metric. In particular, $M$ is K\"ahler.
\end{conj}

As evidence for Conjecture~\ref{newfinovezzoni}, we prove the following theorem. 

\begin{theorem}\label{main}
Let $M=G/\Gamma$ be a compact quotient of a Lie group by a  lattice $\Gamma$, endowed with an invariant complex structure. Assume that $M$ has an invariant balanced metric $\omega_B$ such that $\rho^{c}(\omega_B)\leq 0$. If $M$ admits an invariant pluriclosed metric $\omega_0$, then the pluriclosed flow with initial condition $\omega_{t|t=0}=\omega_0$ has a long-time solution which converges smoothly to a K\"ahler metric. 
\end{theorem}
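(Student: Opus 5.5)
The plan is to reduce the pluriclosed flow to an ODE on the Lie algebra $\mathfrak g$ of $G$, and then use $\omega_B$ as a calibrating form to control the solution.

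\textbf{Step 1: reduction to an ODE.} Since $\omega_0$ and the complex structure are invariant, uniqueness of the flow shows that $\omega_t$ stays invariant. The flow is then an ODE on the finite-dimensional cone of positive $J$-invariant $(1,1)$-forms on $\mathfrak g$, so short-time existence is automatic. Recall the form of the flow:
\[
\partial_t\omega_t=-(\rho^B(\omega_t))^{1,1}=-\rho^c(\omega_t)-\partial\partial^*_{\omega_t}\omega_t-\bar\partial\bar\partial^*_{\omega_t}\omega_t ,
\]
with the sign convention fixed so that this holds for pluriclosed metrics.

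\textbf{Step 2: the Chern--Ricci term vanishes.} For invariant Hermitian metrics the ratio of volume forms is constant. Hence $\rho^c(\omega)$ does not depend on the invariant metric $\omega$, and $\rho^c(\omega_t)=\rho^c(\omega_B)$ for all $t$. Moreover $\rho^c(\omega_B)=d\eta$ for an invariant $1$-form $\eta$; classically $\eta$ is built from the trace of $J\circ\mathrm{ad}$. Since $G$ admits a lattice, it is unimodular, so integration on $M$ agrees, up to a constant, with evaluation on invariant top forms, and exact invariant top forms vanish. Using $d\omega_B^{n-1}=0$ we get
\[
0=\int_M d\eta\wedge\omega_B^{n-1}=\frac1n\int_M \mathrm{tr}_{\omega_B}\rho^c(\omega_B)\,\omega_B^n .
\]
Together with $\rho^c(\omega_B)\le 0$ this forces $\rho^c(\omega_B)=0$, and therefore $\rho^c(\omega_t)\equiv0$.

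\textbf{Step 3: a conserved quantity.} After Step 2 the flow reads $\partial_t\omega_t=-\partial\partial^*\omega_t-\bar\partial\bar\partial^*\omega_t$. This is $d$-exact modulo terms that pair to zero against the closed form $\omega_B^{n-1}$. Hence $\omega_t=\omega_0+\partial\bar\alpha_t+\bar\partial\alpha_t$ for invariant $(1,0)$-forms $\alpha_t$. In particular the Aeppli class is fixed, and
\[
\int_M\omega_t\wedge\omega_B^{n-1}=\int_M\omega_0\wedge\omega_B^{n-1}
\]
for all $t$. Since $\omega_t>0$, this gives a uniform upper bound $\omega_t\le C\omega_B$ on $\mathfrak g$. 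The pluriclosed condition, preserved along the flow, is used to rewrite $|\partial\omega_t|^2$ terms.

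\textbf{Step 4: main obstacle, lower bound and convergence.} The hard part is a uniform lower bound $\omega_t\ge c\,\omega_B$ and decay of the torsion. For the lower bound, I would first try to exploit that $\det_{\omega_B}\omega_t$ evolves by $\mathrm{tr}_{\omega_t}\partial_t\omega_t=-\mathrm{tr}_{\omega_t}(\partial\partial^*\omega+\bar\partial\bar\partial^*\omega)$. For invariant forms the latter equals $-|\partial^*\omega_t|^2$-type terms plus contractions with $\mathrm{ad}$, and unimodularity should make the volume non-decreasing. Combined with the upper bound from Step 3, this gives two-sided bounds, hence existence for all $t$.

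For convergence, I would look for a Lyapunov functional. The natural candidate is the torsion energy $\int_M|d\omega_t|^2_{\omega_t}dV_{\omega_t}$, or equivalently the $\omega_B$-pairing of $\partial\omega_t\wedge\bar\partial\omega_t$, which is $\le0$ by pluriclosedness. I would check that it is monotone with derivative controlled by $-|\partial\partial^*\omega_t|^2$. Compactness of the bounded set of metrics then yields limit points at which $\partial^*\omega=0$. Since also $\partial\bar\partial\omega=0$, a pluriclosed metric that is balanced in this sense is K\"ahler by \cite{AI,Popovici}. Upgrading subsequential convergence to full smooth convergence is automatic in the finite-dimensional ODE setting once the limit set is shown to consist of fixed points. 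I would argue this via the Lojasiewicz inequality for the real-analytic ODE, or directly via exponential decay of $\alpha_t$ near Kähler metrics.
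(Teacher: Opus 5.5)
Your Steps 1--3 are correct and coincide with the paper. Step 2 is the Michelsohn-type remark in the introduction, and Step 3 is exactly the paper's conservation of $\int_M\omega_t\wedge\omega_B^{n-1}$, which gives the upper bound.

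\textbf{The lower bound in Step 4.} Here the bound is asserted rather than derived, and with your displayed equation the computation gives the opposite inequality. The pluriclosed flow reads $\partial_t\omega=\partial\partial^*\omega+\bar\partial\bar\partial^*\omega-\rho^c(\omega)$. Since $\partial^*$ is the formal adjoint, there is no sign convention left to choose. For invariant data on $G/\Gamma$, the functions $\mathrm{tr}_\omega(\partial\partial^*\omega)=\langle\partial\partial^*\omega,\omega\rangle_\omega$ and $|\partial^*\omega|^2_\omega$ are constant. Integrating by parts over $M$ therefore gives $\mathrm{tr}_\omega(\partial\partial^*\omega+\bar\partial\bar\partial^*\omega)=2|\partial^*\omega|^2_\omega$ pointwise, with no extra $\mathrm{ad}$-terms. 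With your sign the volume would be non-increasing. With the correct sign and $\rho^c\equiv 0$ one gets $\frac{d}{dt}\log(\omega_t^n/\omega_B^n)=2|\partial^*\omega_t|^2_{\omega_t}\ge 0$. This plays the role of the identity $\frac{d}{dt}\log(\omega_B^n/\omega_t^n)=-|T|^2_{\omega_t}+\mathrm{tr}_{\omega_t}\rho^c(\omega_B)$ that the paper takes from Streets, and your two-sided bound and global existence of the ODE then follow.

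\textbf{The genuine gap: convergence.} The torsion-energy functional is only a hope (``I would check''). The claim that subsequential convergence upgrades automatically once the limit set consists of fixed points is wrong. Here the fixed points, the invariant K\"ahler metrics, form a continuum, and an $\omega$-limit set of even an analytic ODE can be a whole arc of equilibria. For example, $\dot r=-(r-1)^3$, $\dot\theta=(r-1)^2$ spirals onto the circle $r=1$ without converging. A Lojasiewicz argument would need a gradient structure you have not produced.

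\textbf{How to close it with what you already have.} The volume is itself the Lyapunov function.
By AM--GM and the conserved trace, $\det_{\omega_B}\omega_t\le(\mathrm{tr}_{\omega_B}\omega_0/n)^n$, so $\int_0^\infty|\partial^*\omega_t|^2\,dt<\infty$.
Since $\omega_t$ stays in a compact subset of the cone, $t\mapsto|\partial^*\omega_t|^2$ is uniformly Lipschitz and hence tends to $0$. Every limit point is then pluriclosed and balanced, hence K\"ahler.

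For uniqueness of the limit, use the Aeppli class from Step 3.
The invariant forms $\partial\bar\alpha+\bar\partial\alpha$ form a finite-dimensional, hence closed, subspace, so two limit points satisfy $\omega'_\infty-\omega_\infty=\partial\bar\beta+\bar\partial\beta$.
Pair this with the closed forms $\omega_\infty^{n-1}$ and $\sum_k(\omega'_\infty)^k\wedge\omega_\infty^{n-1-k}$, and use that $\mathrm{tr}_{\omega_\infty}\omega'_\infty$ and $(\omega'_\infty)^n/\omega_\infty^n$ are constants. The eigenvalues of $\omega'_\infty$ relative to $\omega_\infty$ then have sum $n$ and product $1$, so all equal $1$.

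\textbf{Comparison with the paper.} Repaired this way, your route genuinely differs from the paper's. It never leaves the finite-dimensional ODE, and it replaces Streets' regularity theorems and soliton-rigidity lemma by elementary arguments. It also proves full convergence, whereas the paper only extracts convergent subsequences and refers to Streets for the rest.
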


Here and in what follows, by an invariant geometric structure on $G/\Gamma$ we mean a structure induced by a left-invariant tensor on $G$, i.e. one that descends from a tensor on $G$ invariant under left translations. Moreover, we recall that on a Hermitian manifold $(M,\omega)$ the {\em Chern Ricci-form} $\rho^c$ is locally defined by 
$$
\rho^c=-i\partial \bar \partial \log\det \omega. 
$$
The form $\rho^c$ is closed and its cohomology class is $2\pi c_1(M)$, where $c_1(M)$ is the first Chern class of $M$.
In light of \cite{V}, the condition $\rho^c \leq 0$ is in fact a condition on the Lie group $G$, since $\rho^c$ is independent of the choice of left-invariant Hermitian metric used in its computation. Moreover, by Michelsohn’s criterion \cite{M}, the condition $\rho^c(\omega_B) \leq 0$ on a compact balanced manifold $M$ with $c_1(M)=0$ forces $\rho^c(\omega_B)=0$, since any exact semi-negative definite $(1,1)$-form must vanish. Hence Theorem~\ref{main} is relevant only in the case $\rho^c=0$, although we prefer to state it in its full generality.

\medskip

As a consequence of Theorem~\ref{main}, we confirm Conjecture~\ref{finovezzoni} for compact quotients of Lie groups endowed with an invariant complex structure, under the assumption that invariant Hermitian metrics have semi-negative definite Chern--Ricci form.

\begin{cor}\label{cor}
Let $M = G/\Gamma$ be a compact quotient of a Lie group by a lattice, endowed with an invariant complex structure. Assume that the Chern--Ricci form of left-invariant metrics on $G$ is semi-negative definite. If $M$ admits both a pluriclosed and a balanced metric, then it admits a K\"ahler metric.
\end{cor}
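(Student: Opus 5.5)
The statement to prove is Corollary~\ref{cor}, and the plan is to reduce it to Theorem~\ref{main}. The only real difficulty is that the hypotheses of the corollary give a pluriclosed metric and a balanced metric on $M$ that need not be invariant. Theorem~\ref{main}, by contrast, needs \emph{invariant} metrics $\omega_0$ and $\omega_B$, together with $\rho^c(\omega_B)\le 0$. The last condition is automatic, since by assumption (and by \cite{V}, $\rho^c$ is the same for every left-invariant Hermitian metric) the Chern--Ricci form of any invariant metric is semi-negative.

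First I will use the standard symmetrization trick of Belgun/Fino--Grantcharov. Since $G$ admits a lattice it is unimodular, so there is a bi-invariant volume form $\nu$ on $G$, descending to $M$, with $\int_M\nu=1$. For a form $\alpha$ on $M$, define the invariant form $\tilde\alpha$ at $e$ by
\[
\tilde\alpha(X_1,\dots,X_k)=\int_{M}\alpha_{m}(X_1|_m,\dots,X_k|_m)\,\nu(m),
\]
where the $X_i$ are left-invariant vector fields. This averaging commutes with $d$, and since $J$ is invariant it also commutes with $\partial$ and $\bar\partial$. It preserves positivity of $(1,1)$-forms.

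Applying this to a pluriclosed metric $\omega$ gives an invariant positive $(1,1)$-form $\tilde\omega$ with $\partial\bar\partial\tilde\omega=\widetilde{\partial\bar\partial\omega}=0$, so $\tilde\omega$ is an invariant pluriclosed metric $\omega_0$.

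For the balanced metric I cannot average $\omega$ directly, because $\tilde\omega^{\,n-1}\neq\widetilde{\omega^{n-1}}$. Instead I will average the closed positive $(n-1,n-1)$-form $\Omega=\omega_B^{n-1}$. The result $\tilde\Omega$ is an invariant closed strictly positive $(n-1,n-1)$-form. By Michelsohn's result \cite{M}, every strictly positive $(n-1,n-1)$-form is the $(n-1)$-st power of a unique positive $(1,1)$-form. Taking the $(n-1)$-th root therefore gives an invariant balanced metric $\tilde\omega_B$ with $\tilde\omega_B^{n-1}=\tilde\Omega$. Uniqueness of the root, together with the invariance of $\tilde\Omega$, is what makes $\tilde\omega_B$ invariant.

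With these in hand, Theorem~\ref{main} applies to $(\tilde\omega_B,\omega_0)$. The pluriclosed flow starting at $\omega_0$ then exists for all time and converges smoothly to a Kähler metric on $M$, which proves the corollary.

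The only step needing care is the balanced averaging: checking that averaging preserves strict positivity of $(n-1,n-1)$-forms, and that the root map is well defined. Both follow pointwise, since positivity is a convex cone condition and the integrand is positive at every point.
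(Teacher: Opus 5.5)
Your proposal is correct and follows the paper's approach: symmetrize to obtain invariant metrics, then apply Theorem~\ref{main}, with $\rho^c(\tilde\omega_B)\le 0$ coming directly from the hypothesis on left-invariant metrics. The paper's written argument averages only the pluriclosed metric, because its final corollary already assumes an invariant balanced metric; your averaging of $\omega_B^{n-1}$ followed by Michelsohn's $(n-1)$-th root is the standard extra step (covered by \cite{Fino-Grantcharov, Ugarte}) needed for the version stated here.
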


The corollary in particular applies when $G$ is nilpotent or almost abelian, since in both cases left-invariant metrics satisfy $\rho^c \leq 0$. Thus, \cite[Theorem 1.1]{FV2} and \cite[Theorem 3.6]{finoparadiso} can be recovered as special cases of Corollary~\ref{cor}. 
Also the well-known fact that compact complex parallelizable manifolds do not admit pluriclosed metrics (see e.g. \cite{brienzafino}) is implied by Corollary \ref{cor}. 

Other cases where the result applies are provided, for instance, by solvmanifolds $G/\Gamma$ whose Lie algebra has a $J$-invariant nilradical of codimension $2$, since, by \cite[Theorem 2.1]{brienzafino}, under these assumptions left-invariant metrics are Chern-flat.

To the best of our knowledge, this provides new instances where the conjecture is verified. Moreover, it extends previously known cases in which the nilradical is abelian \cite{CZ,GZ}.

%Finally, Corollary~\ref{cor} also applies in the %setting of \cite[Theorem 5.1]{GiPo}. Indeed, if %$(M,J,h)$ is a balanced Hermitian manifold with %$M=G_0/\Gamma$, where $G_0$ is a non-compact real %simple Lie group of inner type, endowed with a %standard invariant complex structure $J$ and an %invariant balanced metric $h$, then by \cite[Theorem %5.1]{GiPo} one has $c_1(M)=0$ and the metric $h$ has %vanishing Chern scalar curvature \textcolor{red}{Qui %sono confuso perch\'e noi necessitiamo che tutto la %forma di Chern-Ricci sia nulla, mentre se capisco %bene nel caso GiPo solo la curvatura scalare si %annulla, hai ragione!}. In particular, all the %hypotheses of Corollary~\ref{cor} are satisfied, and %we recover that the conjecture holds in this case, in %agreement with the result previously proved in &\cite{GiPo}.

\medskip

\noindent \textbf{Acknowledgments:} The authors would like to thank Elia Fusi and Giovanni Gentili for usefull observations.  
The authors are partially supported by GNSAGA (INdAM). Anna Fino is also supported by Project PRIN 2022 \lq \lq Real and complex manifolds: Geometry and Holomorphic Dynamics” and  by  a grant from the Simons Foundation (\#944448).

\section{Proof of Theorem \ref{main}}
In this section we focus on the proof of Theorem~\ref{main}. Let $M = G/\Gamma$ be a compact quotient of a Lie group $G$ by a lattice $\Gamma$, endowed with an invariant complex structure. Assume that $M$ admits an invariant balanced metric $\omega_B$ such that $\rho^c(\omega_B) \leq 0$.

 Consider the pluriclosed flow $\omega_t$ with initial datum $\omega_0$, where $\omega_0$ is an invariant  pluriclosed metric on $M$.

We recall that $\omega_t$ solves 
$$
\partial_t\omega_t=-\rho^b(\omega_t)^{1,1}\,,
$$
where $\rho^b(\omega_t)$ is the Bismut-Ricci form of $\omega_t$ and the upper script $1,1$ denotes the $(1,1)$-component.  
Since $\omega_0$ is invariant, $\omega_t$ remains invariant for all $t$.   

Moreover, since $c_1(M)=0$, the Bismut--Ricci form $\rho^b(\omega_t)$ is exact for every $t$. Hence,
\[
\frac{d}{dt} \int_M \omega_t \wedge \omega_B^{n-1}
= - \int_M \rho^b(\omega_t) \wedge \omega_B^{n-1} = 0,
\]
where we used that $\omega_B$ is balanced, so $d(\omega_B^{n-1})=0$, and thus the integral of an exact form against $\omega_B^{n-1}$ vanishes by Stokes' theorem.

Since
\[
\omega_t \wedge {\omega_B}^{n-1} = \mathrm{tr}_{{\omega_B}}(\omega_t)\,{\omega_B}^n,
\]
and since $\omega_t$ and $\omega_B$ are invariant, the function $\mathrm{tr}_{\omega_B}(\omega_t)$ is invariant on $M$ and therefore constant.

Moreover, from \cite[Lemma 6.1]{S2} we have
\[
\frac{d}{dt} \log \frac{{\omega_B}^n}{\omega_t^n}
= -|T|_{\omega_t}^2 + \mathrm{tr}_{\omega_t} \rho^{c}({\omega_B}),
\]
where $T$ denotes the torsion tensor of the Chern  connection associated to the Hermitian metric $\omega_t$, and $|T|_{\omega_t}^2$ is its pointwise norm with respect to $\omega_t$.

The assumption  $\rho^{c}(\omega_B)\leq 0$ then implies 
$$
\frac{d}{dt}\log\frac{ \omega_B^n}{\omega_t^n}\leq 0
$$
and  hence that $\log\frac{\omega_B^n}{\omega_t^n}$ is bounded from above along the flow. It follows 
$$
\frac{1}{C}\omega_B \leq \omega_t \leq C \omega_B 
$$
for a uniform constant $C$ and so by applying \cite[Theorems 1.7 and 1.8]{S2} we have that $\omega_t$ is defined in $[0,\infty)$ and its norm is uniformly bounded. We then  argue as in the proof of \cite[Theorem 1.1]{S2} (see also \cite[Section 9.8]{mariobook})   to conclude that every sequence $t_j \to \infty$ admits a subsequence $t_{j_k}$ such that $\omega_{t_{j_k}}$ converges smoothly to a steady soliton $\omega_{\infty}$ of the pluriclosed flow.

Assumption $\rho^{c}(\omega_B)\leq 0$ implies that $\omega_{\infty}$ is K\"ahler Ricci-flat. This follows from Lemma~6.3 in \cite{S2}, which gives a rigidity result for steady pluriclosed solitons under this curvature assumption. Hence $\omega_{\infty}$ is K\"ahler Ricci-flat, and the result follows. \hfill $\square$

\smallskip

 We now deduce a direct consequence of Theorem~\ref{main}.

\begin{cor}
Let $M=G/\Gamma$ be a compact quotient of a Lie group by a lattice, endowed with an invariant complex structure. Assume that $M$ admits an invariant balanced metric $\omega_B$ such that $\rho^{c}(\omega_B)\leq 0$. If $M$ admits a pluriclosed metric, then $M$ admits a K\"ahler metric.
\end{cor}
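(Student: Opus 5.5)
The key difference from Theorem~\ref{main} is that the pluriclosed metric is no longer assumed to be invariant. So the plan is to first replace an arbitrary pluriclosed metric by an invariant one, using the symmetrization (averaging) trick, and then apply Theorem~\ref{main}.

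\textbf{Step 1 (unimodularity).} Since $G$ admits a lattice $\Gamma$, the group $G$ is unimodular. Hence $M=G/\Gamma$ carries a finite $G$-invariant volume form $\nu$, induced by a bi-invariant Haar measure; we normalize it so that $\int_M \nu = 1$.

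\textbf{Step 2 (averaging).} Let $\omega$ be any pluriclosed metric on $M$. Identify invariant forms on $M$ with forms on the Lie algebra $\mathfrak g$, and define
\[
\tilde\omega(X,Y)=\int_{M} \omega_x(X_x,Y_x)\,\nu(x),
\]
for $X,Y\in\mathfrak g$ viewed as invariant vector fields on $M$. Then:
\begin{itemize}
\item $\tilde\omega$ is a positive-definite invariant $(1,1)$-form, because $J$ is invariant and positivity is preserved under averaging.
\item By the Belgun-type symmetrization argument, the averaging map commutes with $d$, and with $\partial$ and $\bar\partial$ because $J$ is invariant. This uses unimodularity: the integral of $\mathcal L_X$ of a function against $\nu$ vanishes.
\item Consequently $\partial\bar\partial\tilde\omega=\widetilde{\partial\bar\partial\omega}=0$, so $\tilde\omega$ is an invariant pluriclosed metric.
\end{itemize}

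\textbf{Step 3 (conclusion).} Apply Theorem~\ref{main} with background metric $\omega_B$ and initial datum $\omega_0=\tilde\omega$. The pluriclosed flow then exists for all time and converges smoothly to a K\"ahler metric $\omega_\infty$ on $M$. In particular $M$ is K\"ahler.

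The main obstacle is Step 2, specifically verifying that symmetrization commutes with $\partial\bar\partial$. One way is to write $\partial\bar\partial\omega$ through the Chevalley–Eilenberg-type formula for $d$ evaluated on invariant fields. Terms of the form $X\cdot\omega(Y,Z)$ integrate to zero by unimodularity, since $\mathrm{div}_\nu X=0$. The remaining bracket terms are linear in $\omega$ with constant coefficients, so they commute with the integral. Because $d$ commutes with averaging and averaging preserves type, the projections $\partial$ and $\bar\partial$ also commute with it.
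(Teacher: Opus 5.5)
Your proposal is correct and matches the paper's proof. You symmetrize the given pluriclosed metric by Belgun-type averaging to get an invariant pluriclosed metric, then apply Theorem~\ref{main}; the paper only cites the literature for the fact that averaging preserves $\partial\bar\partial\omega=0$, while you sketch that verification (via unimodularity and invariance of $J$) yourself.
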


\begin{proof}
It is enough to observe that if $M$ admits a pluriclosed metric, then it also admits an invariant pluriclosed metric, obtained by the symmetrization process on the Lie group $G$. This procedure consists in averaging the metric over the compact quotient $M=G/\Gamma$, and it preserves the pluriclosed condition (see \cite{Belgun,Fino-Grantcharov, Ugarte}). Hence all the assumptions of Theorem~\ref{main} are satisfied, and the conclusion follows.
\end{proof}

\end{document}